\theoremstyle{plain}
    \newtheorem{thm}{Theorem}
    \newtheorem{prop}[thm]{Proposition}
    \newtheorem{cor}[thm]{Corollary}
    \newtheorem{quest}[thm]{Question}
\theoremstyle{definition}
\theoremstyle{remark}
\DeclareMathOperator{\Inv}{Inv} 
\DeclareMathOperator{\Cl}{Cl}\DeclareMathOperator{\Cll}{Cl_{loc}}
\newcommand{\inv}{\Inv}
\newcommand{\um}{^{(m)}}
\newcommand{\cll}[1]{\langle #1 \rangle_{loc}}
\renewcommand{\O}{{\mathscr O}}
\newcommand{\On}{{\mathscr O}^{(n)}}
\newcommand{\Oo}{{\mathscr O}^{(1)}}
\DeclareMathOperator{\ppol}{Pol}
\newcommand{\C}{{\mathscr C}}
\newcommand{\D}{{\mathscr D}}
\newcommand{\F}{{\mathscr F}}
\newcommand{\R}{{\mathscr R}}
\newcommand{\X}{{\mathfrak X}}
\newcommand{\G}{{\mathfrak G}}
\renewcommand{\H}{{\mathfrak H}}
\renewcommand{\S}{{\mathscr S}}
\renewcommand{\L}{{\mathfrak L}}
    \title{Sublattices of the lattice of local clones}
    \author{Michael Pinsker}
    \email{marula@gmx.at}
    \urladdr{http://dmg.tuwien.ac.at/pinsker/}
    \address{Laboratoire de Math\'{e}matiques Nicolas Oresme\\
CNRS UMR 6139\\ Universit\'{e} de Caen\\14032 Caen Cedex\\
France}
    \thanks{The author is grateful for support through project P17812 as well as through the Erwin Schr\"{o}dinger Fellowship of the Austrian Science Fund.}
    \keywords{clone; local closure;
complete lattice; embedding} \subjclass[2000]{Primary 08A40;
secondary 08A05}
\begin{document}
    \maketitle
    \begin{abstract}
        We investigate the complexity of the lattice of local clones over
        a countably infinite base set. In particular, we prove
        that this lattice contains all algebraic lattices with at
        most countably many compact elements as complete
        sublattices, but that the class of lattices embeddable into
        the local clone lattice is strictly larger than that.
    \end{abstract}


    \section{Local clones}
        Fix a countably infinite base set $X$, and denote for all $n\geq 1$ the set $X^{X^n}=\{f: X^n\rightarrow X\}$
        of $n$-ary operations on $X$ by $\On$. Then the union
        $\O:=\bigcup_{n\geq 1}\On$ is the set of all finitary
        operations on $X$. A \emph{clone} $\C$ is a subset of $\O$
        satisfying the following two properties:
        \begin{itemize}
            \item $\C$ contains all projections, i.e.\  for all $1\leq
            k\leq n$ the operation $\pi^n_k\in\On$ defined by
            $\pi^n_k(x_1,\ldots,x_n)=x_k$, and

            \item $\C$ is closed under composition, i.e.\  whenever
            $f\in\C$ is $n$-ary and $g_1,\ldots,g_n\in\C$ are
            $m$-ary, then the operation $f(g_1,\ldots,g_n)\in\O\um$ defined by
            $$
                (x_1,\ldots,x_m)\mapsto f(g_1(x_1,\ldots,x_m),\ldots,g_n(x_1,\ldots,x_m))
            $$
            also is an element of $\C$.
        \end{itemize}

        Since arbitrary intersections of clones are again clones, the set of all clones on $X$, equipped with the order of
        inclusion, forms a complete lattice $\Cl(X)$. In this paper, we are not interested
        in all clones of $\Cl(X)$, but only in
        clones which satisfy an additional topological closure property: Equip $X$ with the discrete topology, and
        $\On=X^{X^n}$ with the corresponding product topology (Tychonoff topology), for every
        $n\geq 1$. A clone $\C$ is called \emph{locally closed} or just
        \emph{local} iff each of its $n$-ary fragments $\C\cap\On$ is a closed
        subset of $\On$. Equivalently, a clone $\C$ is local iff it satisfies the
            following interpolation property:\begin{quote}
                For all $n\geq 1$ and all $g\in\On$, if for all finite $A\subseteq X^n$
                there exists an $n$-ary $f\in\C$ which agrees with
                $g$ on $A$, then $g\in\C$.\end{quote}

        Again, taking the set of all local clones on $X$, and
        ordering them according to set-theoretical inclusion, one
        obtains a complete lattice, which we denote by $\Cll(X)$: This is because intersections of
        clones are clones, and because arbitrary intersections of
        closed sets are closed. We are interested in the structure
        of $\Cll(X)$, in particular in how complicated it is as a
        lattice.

        Before we start our investigations, we give an alternative
        description of local clones which will be useful.
        Let $f\in\On$ and let $\rho\subseteq X^m$ be a relation. We
        say that $f$ \emph{preserves} $\rho$ iff
        $f(r_1,\ldots,r_n)\in\rho$ whenever $r_1,\ldots ,r_n\in\rho$,
        where $f(r_1,\ldots,r_n)$ is calculated componentwise.
        For a set of relations $\R$, we write $\ppol(\R)$ for the set
        of those operations in $\O$ which preserve all $\rho\in\R$.
        The operations in $\ppol(\R)$ are called \emph{polymorphisms}
        of $\R$, hence the symbol $\ppol$. The following
        is due to \cite{Rom77}, see also the textbook\ \cite{Sze86}.

        \begin{prop}
            $\ppol(\R)$ is a local clone for all sets of relations
            $\R$. Moreover, every local clone is of this form.
        \end{prop}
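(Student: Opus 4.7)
The plan is to handle the two assertions separately. The first is a routine verification; the second rests on a standard Galois-theoretic trick based on constructing, for each finite restriction, a relation witnessing membership.

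For the first claim, I would check the three defining properties of a local clone. Projections preserve every relation, since $\pi^n_k$ applied componentwise to $r_1,\ldots,r_n\in\rho$ returns $r_k$. If $f\in\ppol(\R)$ is $n$-ary and $g_1,\ldots,g_n\in\ppol(\R)$ are $m$-ary, then for tuples $r_1,\ldots,r_m\in\rho$ each $g_i(r_1,\ldots,r_m)$ lies in $\rho$, and applying $f$ componentwise to these yields again a tuple in $\rho$; so composition is preserved. For local closure, suppose $g\in\On$ and for every finite $A\subseteq X^n$ there is an $f\in\ppol(\R)$ agreeing with $g$ on $A$. Given any $\rho\in\R$ of arity $s$ and any $r_1,\ldots,r_n\in\rho$, take $A$ to be the set of $s$ columns of the $n\times s$ matrix whose rows are $r_1,\ldots,r_n$; then $A$ is finite, and any $f\in\ppol(\R)$ agreeing with $g$ on $A$ satisfies $g(r_1,\ldots,r_n)=f(r_1,\ldots,r_n)\in\rho$, so $g\in\ppol(\R)$.

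For the second claim, let $\C$ be an arbitrary local clone. Set $\R:=\inv(\C)$, the set of all finitary relations preserved by every operation in $\C$. The inclusion $\C\subseteq\ppol(\R)$ is immediate from the definition of $\R$. For the converse, fix $g\in\ppol(\R)$, say $n$-ary, and a finite subset $A=\{a_1,\ldots,a_s\}\subseteq X^n$; by local closure of $\C$ it suffices to produce an $f\in\C$ agreeing with $g$ on $A$. Form the $n\times s$ matrix whose $j$-th column is $a_j$, and let $r_i\in X^s$ denote its $i$-th row. Define
$$
    \rho:=\{(h(a_1),\ldots,h(a_s)):h\in\C\cap\On\}\subseteq X^s.
$$
Then $r_1,\ldots,r_n\in\rho$ (take $h$ to be the $n$-ary projections), and $\rho\in\inv(\C)$ because applying any $k$-ary $h'\in\C$ componentwise to elements of $\rho$ yields a tuple of the form $(h'(h_1,\ldots,h_k)(a_1),\ldots,h'(h_1,\ldots,h_k)(a_s))$, which lies in $\rho$ since $\C$ is closed under composition.

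Consequently $g$ preserves $\rho$, so $g(r_1,\ldots,r_n)\in\rho$. By definition of $\rho$ this means there exists an $n$-ary $f\in\C$ with $(g(a_1),\ldots,g(a_s))=(f(a_1),\ldots,f(a_s))$, i.e.\ $f$ agrees with $g$ on $A$, as required. The main conceptual step is the construction of the relation $\rho$; once one sees that the orbit of the row tuples under the componentwise action of $\C$ is automatically invariant and contains all the information about the finite restriction of $g$, the proof essentially writes itself via the interpolation property characterising local clones.
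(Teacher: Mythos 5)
Your proof is correct, and it is the standard argument: the paper itself gives no proof of this proposition, citing Romov and Szendrei instead, and your construction of the invariant relation $\rho=\{(h(a_1),\ldots,h(a_s)):h\in\C\cap\On\}$ together with the interpolation property is exactly the classical route taken in those references.
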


        Similarly, for an operation $f\in\On$ and a relation $\rho\subseteq X^m$, we say
        that $\rho$ is \emph{invariant} under $f$ iff $f$ preserves $\rho$.
        Given a set of operations $\F\subseteq\O$, we write
        $\inv(\F)$ for the set of all relations which are invariant
        under all $f\in\F$. Since arbitrary intersections of local clones are local clones again,
        the mapping on the power set of $\O$ which assigns
        to every set of operations $\F\subseteq\O$ the smallest
        local clone $\cll{\F}$ containing $\F$ is a hull operator,
        the closed elements of which are exactly the local clones.
        Using the operators $\ppol$ and $\inv$ which connect
        operations and relations, one obtains the following well-known alternative
        for
        describing this operator (confer \cite{Rom77} or \cite{Sze86}).

        \begin{prop}\label{prop:localClosure}
            Let $\F\subseteq\O$. Then $\cll{\F}=\ppol\Inv(\F)$.
        \end{prop}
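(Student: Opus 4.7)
The plan is to prove the two inclusions separately. For $\cll{\F}\subseteq\ppol\Inv(\F)$, I would observe that by the previous proposition $\ppol\Inv(\F)$ is already a local clone, and that $\F\subseteq\ppol\Inv(\F)$ is immediate from the definition of $\Inv$: every $f\in\F$ preserves every relation that is invariant under $\F$. Since $\cll{\F}$ is by definition the smallest local clone containing $\F$, the inclusion follows.

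For the non-trivial direction $\ppol\Inv(\F)\subseteq\cll{\F}$, I would invoke the interpolation characterization of locally closed clones. Fix $g\in\ppol\Inv(\F)$ of arity $n$ and an arbitrary finite subset $A=\{a_1,\ldots,a_k\}\subseteq X^n$, write $a_i=(a_{i,1},\ldots,a_{i,n})$, and note that it suffices to produce an element of $\cl{\F}$ (the ordinary clone generated by $\F$, which sits inside $\cll{\F}$) that agrees with $g$ on $A$.

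The key construction is to reshape $A$ into $n$ column tuples $r_j:=(a_{1,j},\ldots,a_{k,j})\in X^k$ and to let $\rho\subseteq X^k$ be the smallest $\F$-invariant relation containing $r_1,\ldots,r_n$. Then $\rho\in\Inv(\F)$, so by hypothesis $g$ preserves $\rho$, whence the componentwise application $(g(a_1),\ldots,g(a_k))=g(r_1,\ldots,r_n)$ lies in $\rho$. The structural claim to verify is that every element of $\rho$ has the form
$$(h(a_{1,1},\ldots,a_{1,n}),\ldots,h(a_{k,1},\ldots,a_{k,n}))$$
for some $n$-ary $h\in\cl{\F}$. Granting this, applied to $g(r_1,\ldots,r_n)$ it yields the required $f:=h\in\cl{\F}$ with $f(a_i)=g(a_i)$ on $A$.

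The main obstacle I anticipate is the explicit description of $\rho$ just used. One checks that the set of all columnwise $\cl{\F}$-images of $r_1,\ldots,r_n$ (i) contains each $r_j$, by taking $h=\pi^n_j$, and (ii) is itself $\F$-invariant, which reduces to composition within the clone and the fact that each $f\in\F$ preserves this relation by construction. So this set is an $\F$-invariant relation containing $r_1,\ldots,r_n$, hence contains $\rho$; conversely every such tuple lies in any $\F$-invariant relation containing the $r_j$, so the two coincide. Once this bookkeeping is in place, the interpolation step is simply the observation that $g(r_1,\ldots,r_n)\in\rho$, and the proof is complete.
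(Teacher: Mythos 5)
Your argument is correct and complete. The paper itself gives no proof of this proposition (it is quoted from Romov and Szendrei's textbook), but what you write is exactly the standard argument from those sources: the easy inclusion via the fact that $\ppol\Inv(\F)$ is a local clone containing $\F$, and the hard inclusion by interpolating $g$ on a finite $A\subseteq X^n$ using the $k$-ary relation generated by the $n$ columns of $A$, whose explicit description as the set of componentwise $\cl{\F}$-images of those columns you verify correctly (the one point worth making fully explicit is that a relation invariant under $\F$ is automatically invariant under all of $\cl{\F}$, since the polymorphisms of any relation form a clone; this is what gives the inclusion of your described set into the generated invariant relation).
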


        As already mentioned, it is the aim of this paper to investigate the structure of the local clone lattice.
        So far, this lattice has been studied only sporadically, e.g.\ in \cite{RS82-LocallyMaximal}, \cite{RS84-LocalCompletenessI}. There, the
         emphasis was put on finding local completeness criteria for sets of operations $\F\subseteq \O$, i.e.\ on how to decide whether or
        not $\cll{\F}=\O$. Only very recently has the importance of
        the local clone lattice to questions from model theory and
        theoretical computer science been revealed:

        Let $\Gamma=(X,\R)$ be a countably infinite structure; that is, $X$ is a
        countably infinite base set and $\R$ is a set of finitary
        relations on $X$. Consider the expansion $\Gamma'$ of $\Gamma$ by all
        relations which are first-order definable from $\Gamma$. More
        precisely, $\Gamma'$ has $X$ as its base set and its relations
        $\R'$ consist of all finitary relations which can be defined
        from relations in $\R$ using first-order formulas. A
        \emph{reduct} of $\Gamma'$ is a structure $\Delta=(X,\D)$, where $\D\subseteq
        \R'$. We also call $\Delta$ a reduct of $\Gamma$, which essentially
        amounts to saying that we expect our structure $\Gamma$ to be
        closed under first-order definitions. Clearly, the set of
        reducts of $\Gamma$ is in one-to-one correspondence with the
        power set of $\R'$, and therefore not of much interest as a
        partial order. However, it might be more reasonable to consider such
        reducts up to, say, \emph{first-order interdefinability}. That is,
        we may consider two reducts $\Delta_1=(X,\D_1)$ and $\Delta_2=(X,\D_2)$ the same iff their
        first-order expansions coincide, or equivalently iff all
        relations in $\D_1$ are first-order definable in $\Delta_2$ and
        vice-versa.

        In 1976, P.\ J.\ Cameron~\cite{Cameron5} showed that there are exactly five reducts of $(\mathbb Q,<)$ up to
        first-order interdefinability. Recently, M.\ Junker and M.\ Ziegler gave a
        new proof of this fact, and established that $(\mathbb Q,<,a)$, the expansion
        of $(\mathbb Q,<)$ by a constant $a$, has 114 reducts~\cite{JunkerZiegler}.
        S.\ Thomas proved that the first-order theory of the random graph also has exactly five reducts, up to
        first-order interdefinability ~\cite{RandomReducts}.

        These examples have in common that the structures under
        consideration are \emph{$\omega$-categorical}, i.e., their first-order
        theories determine
        their countable models up to isomorphism. This is no
        coincidence: For, given an $\omega$-categorical structure $\Gamma$,
        its reducts up to first-order interdefinability are in
        one-to-one correspondence with the locally closed permutation groups which contain the automorphism
        group of $\Gamma$, providing a tool for describing such reducts (confer \cite{Cam90-Oligo}).

        A natural variant of these concepts
        is to consider reducts up to \emph{primitive positive
        interdefinability}. That is, we consider two reducts
        $\Delta_1, \Delta_2$ of $\Gamma$ the same iff their
        expansions by all relations which are definable from
        each of the structures by primitive positive
        formulas coincide. (A first-order formula is called
        \emph{primitive positive} iff it is of the form $\exists
        \overline x (\phi_1 \wedge \dots \wedge \phi_l)$ for atomic formulas
        $\phi_1,\dots,\phi_l$.)  It turns out that for $\omega$-categorical structures $\Gamma$, the \emph{local
        clones} containing all automorphisms of $\Gamma$ are in one-to-one correspondence with those reducts
        of the first-order expansion of
        $\Gamma$ which are closed under primitive positive
        definitions. This recent connection, which relies on a theorem from \cite{BN06}, has already been utilized in \cite{BCP}, where the reducts of $(\mathbb{N},=)$ have been
        classified by this method (a surprisingly complicated task, as it turned out!).

        We mention in passing that distinguishing relational
        structures up to primitive positive interdefinability, and therefore understanding the structure of $\Cll(X)$, has
        recently
        gained significant importance in theoretical computer
        science, more precisely for what is known as the Constraint
        Satisfaction Problem; see \cite{JBK} or \cite{Bodirsky}.

    \section{The structure of the local clone lattice}

        For our investigations of $\Cll(X)$ we will need the concept
        of an \emph{algebraic lattice}.

        An element $a$ of a complete lattice $\L$ is called \emph{compact}
        iff it has the property that
        whenever $A\subseteq \L$ and $a\leq\bigvee A$, then there exists a finite
        $A'\subseteq A$ with $a\leq \bigvee A'$. $\L$ is called
        \emph{algebraic} iff every element is the supremum of
        compact elements. By their very definition, algebraic
        lattices are determined by their compact elements. More
        precisely, the compact elements form a join-semilattice,
        and every algebraic lattice is isomorphic to the lattice of
        all join-semilattice ideals of the join-semilattice of compact
        elements, see e.g.\  the textbook \cite{CD73}. Whereas the lattice $\Cl(X)$ of all (non-local)
        clones over $X$ is algebraic, it has been discovered
        recently in the survey paper \cite{GP07} that the local clone lattice $\Cll(X)$ is far from being
        so; since that paper is yet to appear, we include a sketch
        of the short proof here.

        \begin{prop}
            The only compact element in the lattice $\Cll(X)$ is
            the clone of projections.
        \end{prop}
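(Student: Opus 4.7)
The clone of projections $\J$ is the minimum element of $\Cll(X)$ and is therefore trivially compact. To prove the converse, I fix a local clone $\C \supsetneq \J$ and a non-projection $f \in \C$ of arity $k$, and aim to exhibit a family $\A \subseteq \Cll(X)$ with $\C \leq \bigvee \A$ yet $f \notin \bigvee \A'$ for every finite $\A' \subseteq \A$.

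The construction of $\A$ proceeds as follows. For each natural number $n$, define an operation $g_n \in \Ok$ which agrees with $f$ everywhere except at a single new perturbation tuple $t_n \in X^k$, where $g_n(t_n)$ is chosen to differ from $f(t_n)$; the $t_n$ are taken pairwise distinct and outside any finite set that carries $f$'s essential behaviour. Set $\A = \{\cll{\{g_n\}} : n \in \mathbb{N}\}$. By the interpolation characterization of local closure preceding Proposition~\ref{prop:localClosure}, $f$ lies in $\bigvee \A$: given any finite test set $A \subseteq X^k$, choose $n$ with $t_n \notin A$, so that $g_n|_A = f|_A$. To secure the stronger containment $\C \leq \bigvee \A$, one augments $\A$ with suitable principal clones of further elements of $\C$, chosen so that finite sub-families still cannot capture $f$.

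The main step is to show $f \notin \cll{\{g_{n_1}, \ldots, g_{n_r}\}}$ for every finite sub-family. The strategy is a combinatorial analysis of composition trees: one chooses a finite witness set $B \subseteq X^k$ containing the perturbation tuples $t_{n_1}, \ldots, t_{n_r}$ together with a tuple $\bar{w}$ on which $f$ exhibits genuine non-projection behaviour, and argues by induction on the depth of the composition tree that any $F \in \cl{\{g_{n_1}, \ldots, g_{n_r}\}}$ reproducing $f(\bar{w})$ must necessarily invoke some generator $g_{n_i}$ in a ``triggering'' position which then propagates its perturbation to $t_{n_i}$, contradicting $F(t_{n_i}) = f(t_{n_i})$.

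The main obstacle lies in this combinatorial step, and in particular in handling the two qualitatively different regimes. When $f$ has \emph{wide} non-projection content, i.e.\ fails to be a projection on the complement of any finite cube, one can use the simpler variant in which $g_n$ is replaced by the operation $f_{B_n}$ agreeing with $f$ on the finite set $B_n$ and with the first projection $\pi^k_1$ outside $B_n$; the induction then shows that every element of $\cl{\{f_{B_1}, \ldots, f_{B_r}\}}$ coincides with a single projection on the cofinite cube $(X \setminus T)^k$, where $T \subseteq X$ is the finite union of coordinates appearing in $B_1 \cup \cdots \cup B_r$, which directly contradicts $f$'s wide non-projection behaviour. The degenerate regime, in which every operation of $\C$ itself agrees with a projection outside some finite cube, is the delicate one and requires the perturbation construction above, together with a symmetry argument between the genuine non-projection tuple of $f$ and the artificial tuples $t_{n_i}$. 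The two cases together cover all $\C \supsetneq \J$.
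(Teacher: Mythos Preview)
Your overall strategy---build a family $\A$ whose join contains $\C$ while every finite subjoin misses a fixed non-projection $f\in\C$---is the right shape, but there is a genuine gap at the step where you pass from $f\in\bigvee\A$ to $\C\le\bigvee\A$. The proposed fix, ``augment $\A$ with suitable principal clones of further elements of $\C$'', is not merely unspecified; it can fail outright. If, say, $\C=\cll{\{h_1,h_2\}}$ with $f\in\cll{\{h_1,h_2\}}$ but $f\notin\cll{\{h_i\}}$ individually, then adding $\cll{\{h_1\}}$ and $\cll{\{h_2\}}$ to $\A$ puts $f$ into a two-element subfamily. There is no evident way to cover all of $\C$ by smaller local clones while keeping $f$ out of every finite join of them; indeed, this is essentially a restatement of the compactness question itself. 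Your ``degenerate regime'' is also only gestured at: you describe a perturbation scheme and promise a ``symmetry argument'', but no mechanism is given that would force a composition of the $g_{n_i}$ to hit a perturbed tuple, and such compositions can be arranged to avoid any prescribed finite set of inputs.

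The paper sidesteps both difficulties with a single idea: choose the witnessing family so that its supremum is \emph{all of $\O$}, making $\C\le\bigvee\A$ automatic for every $\C$. Concretely, fix a linear order on $X$ without last element and set $\C_a=\{f:\forall x\,f(x)\le a\}$ and $\D_a=\{f:\max(x)\ge a\Rightarrow f(x)\ge\max(x)\}$. Both $(\cll{\C_a})_a$ and $(\cll{\D_a})_a$ are increasing chains whose local join is $\O$, so finite subjoins are single members of the chain. The dichotomy is then on $f$ alone: an unbounded non-projection lies in no $\cll{\C_a}$, a bounded operation lies in no $\cll{\D_a}$. No composition-tree analysis and no case split on the ``shape'' of $\C$ is needed. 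The lesson is that once the family is chosen with supremum $\O$, the problem you are wrestling with---covering the rest of $\C$---evaporates.
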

        \begin{proof}
            Fix a linear order $\le$ on~$X$ without last element.
            Denote the arity of every $f\in\O$ by $n_f$.
            For each $a\in X$ let
            $$
            \begin{array}{rl}
            \C_a &:= \{f\in \O: \forall  x\in X^{n_f}\,   (f(x)  \le a) \} \quad\text{ and} \\
            \D_a &:= \{f\in \O: \forall x\in X^{n_f}\, ( \max(x)\ge a \
            \Rightarrow  \ f(x) \ge \max (x)) \, \}.
            \end{array}
            $$
            Then
            \begin{enumerate}
            \item $\cll{\C_a} = \C_a \cup \{\,\pi^n_k:1\leq k\leq n<\omega\,\}$.
            \item $\cll{\D_a} $ is the set of all operations which are essentially in
            $\D_a$ (i.e., except for dummy variables).
            \item If $ a \le a'$, then $\C_a \subseteq \C_{a'} $ and $\D_a \subseteq \D_{a'}$, hence every finite union of clones $\cll{\C_a}$ (or~$\cll{\D_a}$, respectively)
            is again a clone of this form.
            \item The local closure of~$\bigcup_a\cll{ \D_a}$, as well as the local
            closure of~$\bigcup_a\cll{ \C_a}$, is the clone of all operations $\O$.
            \item If $f\in \O$ has unbounded range,
            then $f\notin  \bigcup_a \cll{\C_a}$ (unless $f$ is a projection).
            \item If $f\in \O$ has bounded range, then $f\notin  \bigcup_a \cll{\D_a}$.
            \item No local clone  $\C$ (other than the clone of projections)  is
            compact in~$\Cl_{loc}(X)$: If $\C$ contains a nontrivial unbounded
            operation, this is witnessed by the family $(\cll{\C_a}: a\in X)$, and if
            $\C$ contains a bounded operation this is witnessed by the family
            $(\cll{\D_a}: a\in X)$.
            \end{enumerate}
            We leave the easily verifiable details to the reader.
        \end{proof}

        How complicated is $\Cll(X)$, in particular, which lattices
        does it contain as sublattices? The latter question has been posed as ``Problem V'' in the survey paper \cite{GP07}. The following is a first easy
        observation which tells us that there is practically no hope that
        $\Cll(X)$ can ever be fully described, since it is believed
        that already the clone lattice over a three-element set is
        too complex to be fully understood.

        \begin{prop}\label{prop:finiteintolocal}
            Let $\Cl(A)$ be the lattice of all clones over a
            finite set $A$. Then $\Cl(A)$ is an
            isomorphic copy of an interval of $\Cll(X)$.
        \end{prop}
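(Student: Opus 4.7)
The plan is to identify $A$ with a subset of~$X$, which is possible since $X$ is countably infinite, and to realise $\Cl(A)$ as an interval of $\Cll(X)$ via restriction to~$A$. Put $\C_1:=\ppol(\{A\})$, the local clone of operations preserving the unary relation $A$, and let $\C_0\subseteq\C_1$ be the local clone of those $g$ whose restriction $g|_{A^{n_g}}$ is a projection. For $\D\in\Cl(A)$ define
\[
F(\D):=\{g\in\C_1:g|_{A^{n_g}}\in\D\}.
\]
Since $A$ is finite, $F(\D)$ is a local clone (the constraint on $g|_{A^{n_g}}$ is clopen in~$\On$), and $F$ is clearly order-preserving and injective, with the minimum and maximum of $\Cl(A)$ sent to $\C_0$ and~$\C_1$ respectively.

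It remains to show that $F$ maps onto $[\C_0,\C_1]$. Given $\C\in[\C_0,\C_1]$, set $\D:=\{g|_{A^{n_g}}:g\in\C\}$; this is a clone on~$A$ because $\C$ is a clone preserving~$A$. The inclusion $\C\subseteq F(\D)$ is immediate, while the reverse inclusion $F(\D)\subseteq\C$ is the main obstacle: an element $f\in F(\D)$ is only pinned down on $A^n$ by membership in~$\D$ and may take any $A$-preserving values on $X^n\setminus A^n$, whereas $\C$ initially only provides some $h\in\C$ with $h|_{A^n}=f|_{A^n}$ and nothing about its values off $A^n$.

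To bridge this gap, fix $f\in F(\D)$ of arity~$n$ and a finite $S\subseteq X^n$; by local closure of~$\C$ it suffices to produce $\tilde g\in\C$ with $\tilde g|_S=f|_S$. Choose $h\in\C$ with $h|_{A^n}=f|_{A^n}$, split $S=S_0\sqcup S_1$ with $S_0\subseteq A^n$ and $S_1\cap A^n=\emptyset$, and introduce two auxiliary operations. The $n$-ary operation $\psi$ is defined by $\psi(\bar y):=f(\bar y)$ for $\bar y\in S_1$ and $\psi(\bar y):=y_1$ otherwise. The $(n+2)$-ary conditional operation $c$ is defined by
\[
c(u,v,y_1,\dots,y_n):=\begin{cases} u & \text{if }(y_1,\dots,y_n)\in A^n,\\ v & \text{otherwise.}\end{cases}
\]
Both $\psi$ and~$c$ preserve~$A$, and their restrictions to $A^n$, respectively $A^{n+2}$, are first projections, so $\psi,c\in\C_0\subseteq\C$. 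The composition $\tilde g:=c(h,\psi,\pi^n_1,\dots,\pi^n_n)$ therefore lies in~$\C$, and by construction equals $h=f$ on $A^n\supseteq S_0$ and equals $\psi=f$ on~$S_1$, so $\tilde g|_S=f|_S$ as required. The key trick is the conditional~$c$: it lives in~$\C_0$ precisely because its restriction to $A^{n+2}$ collapses to a projection, yet outside~$A^n$ it allows us to patch~$h$ with the tailor-made~$\psi$.
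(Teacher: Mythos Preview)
Your proof is correct and follows the same strategy as the paper: your map $F$ is exactly the paper's $\sigma$, and the surjectivity argument hinges on the same trick of using an operation that restricts to a projection on powers of~$A$ (the paper combines your $c$ and $\psi$ into a single $(n{+}1)$-ary operation $s(x_1,\dots,x_n,y)$ equal to $y$ on $A^n$ and to $f$ elsewhere). The only difference is that your detour through a finite $S$ and local closure is unnecessary --- if you let $\psi$ equal $f$ on all of $X^n\setminus A^n$ (it still lies in $\C_0$), then $\tilde g=f$ globally, which is how the paper proceeds and which yields the slightly stronger fact that every clone (not just every local clone) in $[\C_0,\C_1]$ has the form $F(\D)$.
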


        \begin{proof}
            Assume without loss of generality that $A\subseteq X$.
            Assign to every operation $f(x_1,\ldots,x_n)$ on $A$ a
            set of $n$-ary operations $\S_f\subseteq\On$ on $X$ as
            follows: An operation $g\in \On$ is an element of $\S_f$
            iff $g$ agrees with $f$ on $A^n$. Let $\sigma$ map every
            clone $\C$ on the base set $A$ to the set
            $\bigcup \{\S_f: f\in \C\}$. Then the following hold:
            \begin{enumerate}
                \item
                For every clone $\C$ on $A$, $\sigma(\C)$ is a local
                clone on $X$.
                \item $\sigma$ maps the clone of all operations on
                $A$ to $\ppol(\{A\})$.
                \item All local clones (in fact: all clones) which
                contain $\sigma(\{f: f \text{ is a}\\\text{projection on } A\})$ (i.e., which contain the local clone on $X$ which, via $\sigma$, corresponds to the
                clone of projections on $A$) and
                which are contained in $\ppol(\{A\})$ are of the
                form $\sigma(\C)$ for some clone $\C$ on $A$.
                \item $\sigma$ is one-one and order preserving.
            \end{enumerate}
            (1) and (2) are easy verifications and left to the
            reader. To see (3), let $\D$ be any clone in the
            mentioned interval, and denote by $\C$ the set of all
            restrictions of operations in $\D$ to appropriate powers of
            $A$. Since $\D\subseteq\ppol(\{A\})$, all such
            restrictions are operations on $A$, and since $\D$ is
            closed under composition and contains all projections, so does
            $\C$. Thus, $\C$ is a clone on $A$. We claim
            $\D=\sigma(\C)$. By the definitions of $\C$ and $\sigma$, we have that $\sigma(\C)$ clearly contains
            $\D$. To see the less obvious inclusion, let
            $f\in\sigma(\C)$ be arbitrary, say of arity
            $m$. The restriction of $f$ to $A^m$ is an element of
            $\C$, hence there exists an $m$-ary $f'\in\D$ which has the same
            restriction to $A^m$ as $f$. Define
            $s(x_1,\ldots,x_m,y)\in\O^{(m+1)}$ by
            $$
                s(x_1,\ldots,x_m,y)=\begin{cases} y &,\text{ if}
                (x_1,\ldots,x_m)\in A^m\\ f(x_1,\ldots,x_m)& ,\text{
                otherwise.}\end{cases}
            $$
            Since $s$ behaves on $A^{m+1}$ like the projection
            onto the last coordinate, and since $\D$ contains $\sigma(\{f: f \text{ is } \text{ a } \text{ projection } \text{ on } A\})$, we infer $s\in\D$. But
            $f(x_1,\ldots,x_m)=s(x_1,\ldots,x_m,f'(x_1,\ldots,x_m))$,
            proving $f\in\D$.\\
            (4) is an immediate consequence of (1) and the
            definitions.
        \end{proof}

        It is known that all countable products of finite lattices
        embed into the clone lattice over a four-element set
        \cite{Bul94}, so by the preceding proposition they also
        embed into $\Cll(X)$. However, there are quite simple
        countable lattices which do not embed into the clone lattice
        over any finite set: The lattice $M_\omega$ consisting of a
        countably infinite antichain plus a smallest and a greatest element
        is an example \cite{Bul93}. We shall see now that the class
        of lattices embeddable into $\Cll(X)$ properly contains the
        class of lattices embeddable into the clone lattice over a
        finite set. In fact, the structure of $\Cll(X)$ is at least as
        complicated as the structure of any algebraic lattice with
        $\aleph_0$ compact elements.

        \begin{thm}\label{thm:algebraicIntervals}
            Every algebraic lattice with a countable number of
            compact elements is a complete sublattice of $\Cll(X)$.
        \end{thm}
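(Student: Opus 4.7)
My plan is first to reduce the problem via a standard representation theorem: every algebraic lattice with a countable set of compact elements is isomorphic to the lattice $\I(S)$ of (order) ideals of its join-semilattice $S$ of compact elements, and $S$ is itself a countable join-semilattice with least element. Thus it suffices to exhibit a complete sublattice embedding $\phi\colon \I(S)\to\Cll(X)$.

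I would try to assign, to each nonzero $s\in S$, a suitable finitary relation $\rho_s$ on $X$, and define
\[
\phi(I):=\ppol(\{\rho_s\;:\;s\in S\setminus I\}).
\]
With distinct $\rho_s$, the map is injective and order-preserving. Moreover, preservation of \emph{arbitrary meets} is automatic: $\ppol$ turns unions of relation sets into intersections of local clones, while complements distribute as $(\bigcap_j I_j)^c=\bigcup_j I_j^c$, so $\phi(\bigcap_j I_j)=\bigcap_j \phi(I_j)$.

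The real work is in arranging that $\phi$ also preserves \emph{arbitrary joins}. In $\Cll(X)$ the join of a family is the local closure of its union, so whenever $I=\bigvee_j I_j$ in $\I(S)$, the nontrivial inclusion to verify is $\phi(I)\subseteq\cll{\bigcup_j\phi(I_j)}$. Equivalently, by Proposition~\ref{prop:localClosure}, $\phi(I)\subseteq\ppol\Inv\bigl(\bigcup_j\phi(I_j)\bigr)$. The $\rho_s$ must therefore be designed so that the semilattice fact ``$s\in I$ iff $s\leq s_{j_1}\vee\cdots\vee s_{j_k}$ for some $s_{j_i}\in I_{j_i}$'' is mirrored on the relational side: the constraint imposed by $\rho_s$ should be locally decomposable into finitely many constraints imposed by $\rho_{s_{j_i}}$ with $s_{j_i}\in I_{j_i}$. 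Given this, the local-closure characterization lets us, for any $f\in\phi(I)$ and any finite $A\subseteq X^n$, interpolate $f$ on $A$ by an operation from the clone generated by $\bigcup_j\phi(I_j)$.

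The main obstacle, and the combinatorial heart of the argument, is the explicit construction of the family $\{\rho_s\}_{s\in S}$. A natural template is to realize $X$ as a disjoint union indexed by $S$ (or by a well-chosen enumeration thereof), with $\rho_s$ supported on the parts indexed by $s'\geq s$ in a fashion that encodes the join structure. One then must check (i) that different $s$ yield genuinely different clones $\phi(\{s\}^{\downarrow})$ versus $\phi(S\setminus\{s\}^{\uparrow})$ so that injectivity holds, and (ii) that a finite piece of any $f\in\phi(I)$ can be realized by an operation that respects the relations attached to a \emph{finite} collection of $s_{j_i}\in I_{j_i}$ whose join dominates the relevant $s$'s appearing in $A$. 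This last step is the hard one: it forces the construction to harmonize the finite-interpolation property of $\Cll(X)$ with the algebraic (compact-element) structure of $\L$, and is where the countability hypothesis on the number of compact elements is used to carry out a bookkeeping argument across the countable index set $S$.
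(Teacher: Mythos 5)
There is a genuine gap: your argument is a framework, not a proof. Everything is set up correctly in the easy directions --- representing $\L$ as the ideal lattice $\I(S)$ of its countable join-semilattice of compact elements, observing that a map of the form $\phi(I)=\ppol(\{\rho_s : s\in S\setminus I\})$ automatically turns intersections of ideals into intersections of clones --- but the entire content of the theorem is concentrated in the one step you explicitly defer, namely the construction of the family $\{\rho_s\}_{s\in S}$. Injectivity of $\phi$ and, above all, preservation of arbitrary joins both hinge on properties of these relations that you only describe as desiderata (``the constraint imposed by $\rho_s$ should be locally decomposable into finitely many constraints imposed by $\rho_{s_{j_i}}$''). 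Controlling joins in $\Cll(X)$, i.e.\ showing $\phi(\bigvee_j I_j)\subseteq\cll{\bigcup_j\phi(I_j)}$ for an arbitrary join-semilattice $S$, is precisely the hard combinatorial problem, and no mechanism is given for solving it; as written, the existence of a suitable family $\{\rho_s\}$ is essentially equivalent to the theorem itself.

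For comparison, the paper sidesteps this difficulty entirely by importing a deep external result: T\r{u}ma's theorem that every algebraic lattice with countably many compact elements is an interval $[\G_1,\G_2]$ in the subgroup lattice of a countable group. Realizing that group on the base set $X$, one sends each subgroup $\H$ to the clone generated by the translations $f_a(x)=a+x$ for $a\in H$. Because compositions of translations are translations, these clones have a completely explicit description (translations by elements of $H$, plus projections and fictitious variables), local closedness is checked by a two-point interpolation argument, and both meets and joins then reduce to meets and joins of subgroups, where the interval structure does all the work. If you want to salvage your relational approach, you would either need to carry out the missing construction of $\{\rho_s\}$ in full --- which is likely to be at least as hard as the group-theoretic route --- or invoke a representation theorem of comparable strength.
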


        To prove Theorem \ref{thm:algebraicIntervals}, we cite
        the following deep theorem from
        \cite{Tum89subgroupLattices}.

        \begin{thm}\label{thm:tuma}
            Every algebraic lattice with a countable number of
            compact elements is isomorphic to an interval in the
            subgroup lattice of a countable group.
        \end{thm}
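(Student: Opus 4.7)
The plan is to reduce the problem to representing algebraic lattices as congruence lattices, and then to encode congruences of a countable algebra as an interval in the subgroup lattice of a countable group.

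First, I would apply the Gr\"{a}tzer--Schmidt theorem, which represents every algebraic lattice as the congruence lattice $\mathrm{Con}(A)$ of some algebra $A$. A cardinality analysis of the standard construction shows that when $L$ has countably many compact elements, the algebra $A$ can be chosen countable, since the compact congruences of a countable algebra are precisely the finitely generated ones and form a countable set. The problem therefore reduces to showing that the congruence lattice of a countable algebra is isomorphic to an interval in the subgroup lattice of a countable group.

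Second, I would invoke the classical internal correspondence from permutation group theory: if $H$ is a transitive permutation group on a set $A$ with point stabilizer $H_a$, then the lattice of block systems of $H$ on $A$ is isomorphic to the interval $[H_a,H]$ in $\mathrm{Sub}(H)$, via the map sending a subgroup $K$ with $H_a \leq K \leq H$ to the block system $\{hKa : h\in H\}$. It therefore suffices to construct a countable transitive group $H$ on $A$ whose block systems are precisely the congruences of the algebra $A$.

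Constructing such an $H$ is the main obstacle. One must ensure both that every congruence of $A$ is a block system of $H$ (``$H$ is rich enough''), and that every $H$-invariant equivalence on $A$ is a congruence of $A$ (``$H$ does not create spurious block systems''). For richness, I would enumerate the countably many compact congruences $c_0, c_1, \ldots$ of $A$ and, in stages, adjoin generators that permute the $c_i$-classes freely among themselves; since countably many generators suffice, $H$ stays countable, and the algebraicity of $\mathrm{Con}(A)$ automatically takes care of the non-compact congruences via directed joins inside the interval. The delicate point is ruling out spurious block systems: each generator added to realize one congruence must be chosen so as not to accidentally force a pair of points to be equivalent under some $H$-invariant equivalence relation that is \emph{not} a congruence of $A$. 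Managing this simultaneous control at every stage of the inductive construction is the genuinely technical heart of Tuma's proof, and is what makes the theorem much deeper than the naive Galois-correspondence argument, which would only embed $L$ as an interval in the uncountable subgroup lattice of $\mathrm{Sym}(A)$.
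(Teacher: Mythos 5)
The paper does not prove this statement at all: it is quoted verbatim as a ``deep theorem'' from T\ocirc{u}ma's 1989 paper \cite{Tum89subgroupLattices}, so there is no in-paper argument to compare yours against. Judged on its own, your proposal is a reasonable high-level outline but not a proof: it contains a genuine gap exactly where you say the ``genuinely technical heart'' lies, and that heart is the entire content of the theorem. The two reductions you do carry out are standard and essentially cost-free. The correspondence between the interval $[H_a,H]$ and the lattice of block systems of a transitive action is classical, and it shows that representing $\L$ as an interval in a subgroup lattice is \emph{equivalent} to representing $\L$ as the lattice of invariant equivalence relations of a transitive $G$-set, i.e.\ as the congruence lattice of a unary algebra of a very special kind. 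So after your second step you have merely restated the problem, not reduced it.

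The first step (Gr\"atzer--Schmidt with cardinality control) is where the approach actually misleads: knowing that $\L\cong\mathrm{Con}(A)$ for some countable algebra $A$ gives you no handle on the problem, because there is no general procedure for turning the congruences of an arbitrary algebra $A$ into the block systems of a transitive group acting on the same underlying set. The requirements pull in opposite directions: every generator you adjoin must preserve \emph{all} congruences of $A$ simultaneously (not just the one $c_i$ you are trying to realize at that stage), while the group generated must destroy every non-congruence equivalence relation --- of which there are continuum many, so they cannot even be enumerated and handled one at a time in an $\omega$-stage induction. Your phrase ``permute the $c_i$-classes freely among themselves'' does not address how such permutations can respect the other $c_j$ and the meet- and join-structure of $\mathrm{Con}(A)$. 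That this tension is not easily resolved is illustrated by the finite analogue: the question of which finite lattices are intervals in subgroup lattices of finite groups (P\'alfy--Pudl\'ak) remains open precisely because congruence lattices of algebras do not transparently convert into block-system lattices. T\ocirc{u}ma's actual argument builds the group directly from the semilattice of compact elements by an elaborate iterated amalgamation, rather than passing through a Gr\"atzer--Schmidt algebra; your sketch would need that construction, or something equally substantial, to become a proof.
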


        \begin{proof}[Proof of Theorem \ref{thm:algebraicIntervals}]
            Let $\L$ be the algebraic lattice to be embedded into
            $\Cll(X)$. Let $\X=(X,+,-,0)$ be the group provided by
            Theorem \ref{thm:tuma}.
            For every $a\in X$, define a unary operation $f_a\in\Oo$ by
            $f_a(x)=a+x$. Clearly, we have
            $f_a(f_b(x))=a+b+x=f_{a+b}(x)$ for all $a,b\in X$. Using
            this, it is easy to verify that
            for all $S\subseteq X$, the (not necessarily local)
            clone $\C_S$
            generated by $\F_S:=\{f_a:a\in S\}$ essentially (that is, up to fictitious variables and projections) consists
            of all operations $f_a$ for which $a$ is in the subsemigroup of $(X,+)$ generated by $S$.
            Let $[\G_1,\G_2]$ be the interval in the
            subgroup lattice of $\X$ that
            $\L$ is isomorphic to. Define a mapping $\sigma:[\G_1,\G_2]\rightarrow \Cll(X)$ sending
            every group
            $\H=(H,+,-,0)$ in the interval to $\C_H$. It follows
            readily from our observation above that the operations in
            $\C_H$ are up to fictitious variables the $f_a$, where $a\in H$, and the projections; in particular, the unary operations in $\C_H$ equal
            $\F_H$ (plus the identity operation, which is an element of $\F_H$ anyway since it equals $f_0$).
            Therefore, $\sigma$ is injective and order-preserving.

            We still have to check that all $\C_H$ are locally closed. To see this, let
            $f\in\cll{\C_H}$; then $f$ depends on only one variable, since all operations in $\C_H$
            depend on only one variable and dependence on several variables is witnessed on finite sets. Assume therefore
            without loss of generality $f\in\Oo$. We claim $f\in \F_H$. To see this, observe
            that $f$ agrees with some $f_a\in \F_H$
            on the finite set
            $\{0\}\subseteq X$. Suppose that there is $b\in X$ such
            that $f(b)\neq f_a(b)=a+b$. Then $f\in\cll{\F_H}$ implies
            that there exists $f_c\in \F_H$ such that $f$ and $f_c$ agree on $\{0,b\}$. But then
            $c=f_c(0)=f(0)=f_a(0)=a$, and thus $f(b)=f_c(b)=c+b=
            a+b=f_a(b)\neq f(b)$, an obvious contradiction. Hence,
            $f=f_a\in \F_H$ and we are done.

            With the explicit description of the $\C_H$ and given that they are indeed local clones, a straightforward check
            shows that $\sigma$ preserves arbitrary meets and joins.
        \end{proof}

        Since in particular, $\Cll(X)$ contains $M_\omega$ as a
        sublattice, and since according to \cite{Bul93}, $M_\omega$ is not a sublattice of the
        clone lattice over any finite set, we have the following
        corollary to Theorem \ref{thm:algebraicIntervals}.

        \begin{cor}\label{cor:localnotinfinite}
            $\Cll(X)$ does not embed into the clone lattice over any
            finite set.
        \end{cor}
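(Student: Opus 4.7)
The plan is to derive the corollary by chaining two facts: first, that $M_\omega$ sits inside $\Cll(X)$, and second, that $M_\omega$ refuses to sit inside any finite clone lattice. The conclusion then follows by a one-line contradiction argument: any embedding of $\Cll(X)$ into $\Cl(A)$ for a finite $A$ would, when precomposed with an embedding of $M_\omega$ into $\Cll(X)$, produce an embedding of $M_\omega$ into $\Cl(A)$, contradicting Bulatov's result from \cite{Bul93}.

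The first step is to verify that $M_\omega$ really is covered by Theorem \ref{thm:algebraicIntervals}. The lattice $M_\omega$ — a countable antichain with an added bottom $0$ and top $1$ — is algebraic: every element other than $1$ is itself compact (each antichain element is the join of the single compact element equal to itself, and $0$ is compact trivially), and $1$ is the join of the countable set of compact atoms. Since there are only countably many elements, there are only countably many compact elements, so Theorem \ref{thm:algebraicIntervals} applies and yields $M_\omega$ as a (complete) sublattice of $\Cll(X)$.

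The second ingredient is cited: by \cite{Bul93}, $M_\omega$ does not embed into the clone lattice $\Cl(A)$ for any finite base set $A$. Combining these, if there were a lattice embedding $\iota : \Cll(X) \hookrightarrow \Cl(A)$ for some finite $A$, then composing $\iota$ with the embedding $M_\omega \hookrightarrow \Cll(X)$ obtained above would yield a lattice embedding of $M_\omega$ into $\Cl(A)$, which is impossible. Hence no such $\iota$ exists.

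There is no real obstacle here; the work has already been done in Theorem \ref{thm:algebraicIntervals} and in the cited paper \cite{Bul93}. The only thing to double-check is that one is using the correct notion of embedding: Theorem \ref{thm:algebraicIntervals} provides a complete sublattice, which is in particular a lattice embedding, and the Bulatov obstruction is already phrased in terms of (plain) sublattice embeddings, so the two notions match up correctly for the contradiction to go through.
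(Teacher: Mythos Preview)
Your proof is correct and follows exactly the same route as the paper: use Theorem~\ref{thm:algebraicIntervals} to embed $M_\omega$ into $\Cll(X)$, invoke \cite{Bul93} to rule out $M_\omega$ as a sublattice of $\Cl(A)$ for finite $A$, and conclude by composing embeddings. One small remark: in $M_\omega$ the top element $1$ is in fact compact as well (any two distinct atoms already join to $1$), so your phrase ``every element other than $1$'' slightly undersells the situation --- but this does not affect the argument, since all you need is algebraicity and countably many compacts.
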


        Observe also that Theorem \ref{thm:algebraicIntervals} is a
        strengthening of Proposition \ref{prop:finiteintolocal} in
        so far as the clone lattice over a finite set is an example
        of an algebraic lattice with countably many compact
        elements. However, in that proposition we obtain an
        embedding as an interval, not just as a complete sublattice.

        What about other lattices, i.e.\ lattices which are more
        complicated or larger than algebraic lattices with countably
        many compact elements? The following proposition puts a restriction
        on which lattices can be sublattices of $\Cll(X)$.

        \begin{prop}\label{prop:intopowersetofomega}
            $\Cll(X)$ embeds as a suborder into the power set of $\omega$. In
            particular, it does not contain any uncountable
            ascending or descending chains.
        \end{prop}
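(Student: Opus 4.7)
The plan is to exhibit an order-preserving injection $\phi$ from $\Cll(X)$ into the power set of a countable set; the chain assertion then follows from a standard combinatorial fact about $\mathcal{P}(\omega)$.

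Let
$$
P := \bigl\{(n,A,g) : n \geq 1, \ A \subseteq X^n \text{ finite}, \ g \colon A \to X\bigr\}.
$$
Since $X$ is countable, for each fixed $n$ the set $X^n$ has only countably many finite subsets, and each such $A$ admits only finitely many maps into $X$; hence $P$ is countable. For every local clone $\C$ set
$$
\phi(\C) := \bigl\{(n,A,g) \in P : \exists f \in \C \cap \On \text{ with } f|_A = g\bigr\}.
$$
Clearly $\C_1 \subseteq \C_2$ forces $\phi(\C_1) \subseteq \phi(\C_2)$, so $\phi$ is order-preserving.

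For injectivity I invoke the interpolation characterization of local closure recalled in the excerpt: an operation $g\in\On$ lies in $\C$ if and only if, for every finite $A\subseteq X^n$, some $f\in\C\cap\On$ agrees with $g$ on $A$, i.e.\ if and only if $(n,A,g|_A)\in\phi(\C)$ for every finite $A\subseteq X^n$. Hence $\phi(\C)$ determines $\C$ completely, and composing with any bijection $P\to\omega$ yields the advertised suborder embedding $\Cll(X)\hookrightarrow\mathcal{P}(\omega)$.

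For the ``in particular'' clause I use the well-known fact that $\mathcal{P}(\omega)$ contains no strictly ascending (nor strictly descending) well-ordered chain of uncountable length: from a hypothetical strictly ascending chain $(A_\alpha)_{\alpha<\omega_1}$ one reads off pairwise distinct witnesses $x_\alpha\in A_{\alpha+1}\setminus A_\alpha\subseteq\omega$, which is impossible; the descending case is dual, or follows by complementation. Transporting these facts through $\phi$ immediately rules out uncountable ascending or descending chains in $\Cll(X)$.

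No step here is particularly deep: the whole proposition reduces to the observation that a local clone is determined by countably many finite-restriction patterns, which is essentially a reformulation of local closure. The only point requiring attention is to confirm that $P$ really is countable and that injectivity genuinely uses the interpolation property rather than some stronger structure; there is no real obstacle.
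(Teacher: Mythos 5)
Your proof is correct and follows essentially the same route as the paper: the paper sends each local clone to the set of all restrictions of its operations to finite domains (a subset of the countable set of finite partial operations) and uses local closure for injectivity, which is exactly your $\phi$. One small slip: for a finite nonempty $A\subseteq X^n$ there are countably many (not finitely many) maps $A\to X$ since $X$ is infinite, but $P$ is still a countable union of countable sets and hence countable, so nothing breaks.
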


        \begin{cor}\label{cor:cizeOfTheCloneLattice}
            The size of $\Cll(X)$ is $2^{\aleph_0}$.
        \end{cor}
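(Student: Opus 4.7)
The plan is to sandwich $|\Cll(X)|$ between $2^{\aleph_0}$ from above and from below, using the two immediately preceding results.

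For the upper bound I would simply quote Proposition~\ref{prop:intopowersetofomega}: since $\Cll(X)$ embeds as a suborder into the power set of $\omega$, its cardinality is at most $|\mathcal{P}(\omega)| = 2^{\aleph_0}$.

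For the lower bound I need an algebraic lattice with countably many compact elements whose total cardinality is already $2^{\aleph_0}$, so that Theorem~\ref{thm:algebraicIntervals} produces a complete sublattice of $\Cll(X)$ of that size. The canonical witness is $(\mathcal{P}(\omega),\subseteq)$ itself: it is algebraic, since every subset of $\omega$ is the join of its finite subsets; its compact elements are precisely the finite subsets of $\omega$, of which there are only $\aleph_0$ many; and yet $|\mathcal{P}(\omega)| = 2^{\aleph_0}$. Applying Theorem~\ref{thm:algebraicIntervals} to this lattice then yields $|\Cll(X)| \geq 2^{\aleph_0}$.

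Given that both Proposition~\ref{prop:intopowersetofomega} and Theorem~\ref{thm:algebraicIntervals} are in hand, there is no substantive obstacle. The only point worth flagging is that Theorem~\ref{thm:algebraicIntervals} in itself does not bound the size of $\Cll(X)$ from below; one has to exhibit a specific algebraic lattice of cardinality $2^{\aleph_0}$ with only countably many compact elements, and $\mathcal{P}(\omega)$ is the cleanest such choice.
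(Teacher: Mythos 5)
Your proof is correct and matches the paper's argument exactly: the upper bound comes from Proposition~\ref{prop:intopowersetofomega}, and the lower bound from applying Theorem~\ref{thm:algebraicIntervals} to $(\mathcal{P}(\omega),\subseteq)$, which the paper also cites as its witness of an algebraic lattice of size $2^{\aleph_0}$ with countably many compact elements.
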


        \begin{proof}[Proof of Corollary
        \ref{cor:cizeOfTheCloneLattice}]
            The fact that all algebraic lattices with at most
            $\aleph_0$ compact elements embed into $\Cll(X)$ shows
            that it must contain at least $2^{\aleph_0}$ elements (since for example the power set of $\omega$ with
            inclusion is such an algebraic lattice).
            The upper bound is a consequence of Proposition
            \ref{prop:intopowersetofomega}.
        \end{proof}

        In order to see the truth of Proposition
        \ref{prop:intopowersetofomega}, the following definition will be
        convenient.

        A \emph{partial clone of finite operations} on $X$ is a set of partial
        operations of finite domain
        on $X$ which contains all restrictions of the projections to finite domains and which is closed
        under composition. The set of partial clones of finite operations on $X$
        forms a complete algebraic lattice, the compact elements of
        which are precisely the finitely generated partial clones.

        \begin{prop}
            The mapping $\sigma$ from $\Cll(X)$ into the lattice of partial clones of finite operations on $X$ which sends every $\C\in\Cll(X)$
            to the partial clone of all restrictions of its operations
            to finite
            domains is one-to-one and preserves arbitrary joins.
        \end{prop}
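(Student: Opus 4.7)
The plan is to verify three things in order: that $\sigma(\C)$ is indeed a partial clone of finite operations for every $\C\in\Cll(X)$, that $\sigma$ is injective, and finally that $\sigma$ preserves arbitrary joins. The first point is routine: a restriction of a projection is a restriction of a projection, and if $p,q_1,\ldots,q_n$ are finite restrictions of $f,g_1,\ldots,g_n\in\C$, then the composition $p(q_1,\ldots,q_n)$ is defined on a finite set and agrees there with $f(g_1,\ldots,g_n)\in\C$, so it lies in $\sigma(\C)$.

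For injectivity I would use the interpolation characterization of locality. Suppose $\sigma(\C_1)=\sigma(\C_2)$ and pick $f\in\C_1$ of arity $n$. For every finite $A\subseteq X^n$ the restriction $f\upharpoonright_A$ lies in $\sigma(\C_1)=\sigma(\C_2)$, so there exists some $f_A\in\C_2$ agreeing with $f$ on $A$. The local closure of $\C_2$ then forces $f\in\C_2$, whence $\C_1\subseteq\C_2$; the reverse inclusion is symmetric.

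For join preservation, write $\C:=\bigvee_i\C_i$ in $\Cll(X)$, let $\D$ be the (non-local) clone generated by $\bigcup_i\C_i$, so that $\C=\cll{\D}$, and let $\P:=\bigvee_i\sigma(\C_i)$ in the lattice of partial clones of finite operations. The inclusion $\P\subseteq\sigma(\C)$ follows from monotonicity of $\sigma$ together with step~1, so the content is in $\sigma(\C)\subseteq\P$. Given a finite restriction $f\upharpoonright_D$ with $f\in\C$, local closure yields some $g\in\D$ agreeing with $f$ on $D$, so it suffices to show $g\upharpoonright_D\in\P$. Since $g\in\D$, it is built by finitely many compositions from operations in $\bigcup_i\C_i$ and projections; I would prove by induction on the length of this composition tree that for every finite set $E$, the restriction $g\upharpoonright_E$ is obtained by composing the corresponding restrictions of the subterms to suitably chosen finite sets (their images under the lower-level partial operations), each of which lies in $\bigcup_i\sigma(\C_i)$ or is a restriction of a projection. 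Applying this with $E=D$ puts $g\upharpoonright_D$ in $\P$.

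The main obstacle is the last step: one must be careful that composing finite partial operations indeed captures the finite restrictions of the composed full operations, since the domain of a composition of partial operations is the pre-image of the outer domain under the tuple of inner operations, and this domain must be chosen large enough to contain $D$. The book-keeping is slight but essential — one inductively enlarges the finite sets from the leaves upward, ensuring that the outermost partial operation is defined on the image of the inner tuple restricted to $D$. Once this is in place, the proposition follows.
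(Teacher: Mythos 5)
Your proof is correct and follows essentially the same lines as the paper's: locality yields injectivity (your interpolation argument is just the contrapositive of the paper's), and joins are handled by matching compositions of finite partial operations with compositions of their total extensions. If anything, you are more explicit than the printed proof on the one genuinely nontrivial point, the inclusion $\sigma(\bigvee_i\C_i)\subseteq\bigvee_i\sigma(\C_i)$: your two-step reduction --- first interpolate $f$ on the finite domain $D$ by some $g$ in the ordinary (non-local) clone generated by $\bigcup_i\C_i$, then induct on the term representing $g$, restricting each node's head operation to the image of $D$ under the tuple of its arguments --- supplies exactly the book-keeping that the paper leaves implicit.
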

        \begin{proof}
            It is obvious that $\sigma(\C)$ is a partial clone of
            finite operations, for all local (in fact: also non-local) clones $\C$.\\
            Let $\C,\D\in\Cll(X)$ be distinct. Say without loss of generality that there is an
            $n$-ary
            $f\in\C\setminus\D$; then since $\D$ is locally closed, there exists some finite set
            $A\subseteq X^n$ such that there is no $g\in \D$ which
            agrees with $f$ on $A$. The restriction of $f$ to $A$
            then witnesses that $\sigma(\C)\neq\sigma(\D)$.\\
            We show that
            $\sigma(\C)\vee\sigma(\D)=\sigma(\C\vee\D)$; the proof for arbitrary
            joins works the same way. It follows directly from the
            definition of $\sigma$ that it is
            order-preserving. Thus, $\sigma(\C\vee\D)$ contains
            both $\sigma(\C)$ and $\sigma(\D)$ and hence also their
            join. Now let $f\in \sigma(\C)\vee\sigma(\D)$. This
            means that it is a composition of partial operations in
            $\sigma(\C)\cup\sigma(\D)$. All partial operations used
            in this composition have extensions to operations in
            $\C$ or $\D$, and if we compose these extensions in the
            same way as the partial operations, we obtain an
            operation in $\C\vee\D$ which agrees with $f$ on the
            domain of the latter. Whence, $f\in\sigma(\C\vee\D)$.
        \end{proof}

        Note that the preceding proposition immediately implies
        Proposition \ref{prop:intopowersetofomega}: The number of partial operations with finite
        domain on $X$ is countable, and therefore partial clones of finite operations can be considered as
        subsets of $\omega$.

        Until today, no other restriction to embeddings into
        $\Cll(X)$ except for Proposition
        \ref{prop:intopowersetofomega} is known, and we ask:

        \begin{quest}
            Does every lattice which is order embeddable into the power set of $\omega$ have a lattice embedding into $\Cll(X)$?
        \end{quest}

        However, it seems difficult to embed even the simplest
        lattices which are not covered by Theorem
        \ref{thm:algebraicIntervals} into $\Cll(X)$. For example, we
        do not know:

        \begin{quest}
            Does the lattice $M_{2^{\aleph_0}}$, which consists of an antichain of length $2^{\aleph_0}$ plus a smallest and a largest element, embed
            into $\Cll(X)$?
        \end{quest}

        So far, we only know

        \begin{prop}
            There exists a join-preserving embedding as well as a meet-preserving embedding of
            $M_{2^{\aleph_0}}$ into $\Cll(X)$.
        \end{prop}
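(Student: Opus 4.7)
The plan is to exhibit, for each of the two embeddings, an antichain of $2^{\aleph_0}$ local clones sitting between a common lower and a common upper bound in $\Cll(X)$, with either uniform pairwise meets (for the meet-preserving direction) or uniform pairwise joins (for the join-preserving direction).

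For the meet-preserving embedding, the idea is to produce $2^{\aleph_0}$ atoms of $\Cll(X)$. For each fixed-point-free involution $\pi$ of $X$, set $\C_\pi:=\cll{\pi}$. I would first verify that the unary fragment of $\C_\pi$ is exactly the two-element monoid $\{\mathrm{id},\pi\}$: any $g$ in the local closure of this monoid agrees on every finite subset $A\subseteq X$ with either $\mathrm{id}$ or $\pi$; if the set $S:=\{x\in X : g(x)=\pi(x)\}$ were neither $\emptyset$ nor $X$, picking $x\in S$ and $y\nin S$ and taking $A=\{x,y\}$ would make $g|_A$ coincide with neither $\mathrm{id}|_A$ nor $\pi|_A$, since the fixed-point-freeness of $\pi$ rules out $\pi(x)=x$ and $\pi(y)=y$. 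Hence $g\in\{\mathrm{id},\pi\}$, and a routine check then shows that $\C_\pi$ consists only of the projections and the operations $(x_1,\dots,x_n)\mapsto\pi(x_k)$, so that $\C_\pi$ is a minimal non-trivial element of $\Cll(X)$. Since $X$ admits $2^{\aleph_0}$ fixed-point-free involutions (one per perfect matching of $X$), and distinct involutions yield distinct clones, this furnishes an antichain of $2^{\aleph_0}$ atoms whose pairwise meets are all forced to equal the clone of projections by minimality. The embedding $\sigma\colon M_{2^{\aleph_0}}\to\Cll(X)$ then sends $\bot$ to the clone of projections, the $\alpha$-th atom to $\C_{\pi_\alpha}$, and $\top$ to $\O$.

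For the join-preserving embedding, I would dualize by producing $2^{\aleph_0}$ coatoms of $\Cll(X)$, i.e.\ maximal local clones. Any two distinct such clones have join equal to $\O$ and trivially form an antichain, so the embedding $\sigma$ may then send $\top$ to $\O$, the atoms to these maximal clones, and $\bot$ to any fixed local clone contained in all of them (e.g.\ the clone of projections). The existence of $2^{\aleph_0}$ maximal local clones on the countably infinite base set is provided by the classification work in \cite{RS82-LocallyMaximal}.

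The main obstacle is the existence of $2^{\aleph_0}$ maximal local clones needed in the join-preserving direction: this is a genuine structural fact about $\Cll(X)$, not an elementary construction, and relies on serious classification results. The meet-preserving direction, by contrast, is self-contained and essentially reduces to checking the local closedness of $\{\mathrm{id},\pi\}$ in $X^X$; here the fixed-point-freeness of $\pi$ is the decisive hypothesis, since otherwise the local closure picks up additional unary functions obtained by choosing $\mathrm{id}$ and $\pi$ independently on the fixed-point set and its complement, and the minimality of $\C_\pi$ can fail.
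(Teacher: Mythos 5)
Your proposal is correct, and its meet-preserving half takes a genuinely different route from the paper's. The paper builds its meet-antichain from the two-valued unary operations $f_A$ (value $a$ on $A$, value $b$ off $A$) for subsets $A\subseteq X\setminus\{a,b\}$, and has to take $\cll{\{c_b\}}$ rather than the trivial clone as the bottom element, precisely because the local closure of $\{f_A\}$ picks up the constant $c_b$; you instead manufacture genuine atoms of $\Cll(X)$ from fixed-point-free involutions, with the clone of projections as bottom. Your interpolation argument showing that the unary part of $\cll{\pi}$ is exactly $\{\mathrm{id},\pi\}$ is sound, and you correctly identify fixed-point-freeness as the hypothesis that kills the ``mixed'' interpolants which the paper's construction instead absorbs into the bottom clone. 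What your version buys is a cleaner lattice-theoretic picture ($2^{\aleph_0}$ atoms of $\Cll(X)$ itself); what the paper's buys is that the verification is a one-line description of $\cll{\{f_{B_i}\}}$. For the join-preserving half you and the paper use the same idea---an antichain of coatoms---but the paper exhibits them concretely as $\ppol(\{A\})$ for the $2^{\aleph_0}$ non-empty proper subsets $A$ of $X$, citing \cite{RS84-LocalCompletenessI} only for the single fact that each $\ppol(\{A\})$ is covered by $\O$; your version leaves the coatoms unspecified and leans on a blanket appeal to the classification literature. That is the one place where you should be concrete: a full classification of maximal local clones is not needed (and is not really what \cite{RS82-LocallyMaximal} provides), whereas the explicit family $\ppol(\{A\})$ does the job and keeps the proof essentially self-contained.
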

        \begin{proof}
            Denote by $0$ and $1$ the smallest and the largest element of $M_{2^{\aleph_0}}$, respectively, and enumerate the elements of its antichain by
            $(a_i)_{i\in {2^{\aleph_0}}}$.\\ We first construct a join-preserving embedding.
            Enumerate the non-empty proper subsets of $X$ by $(A_i)_{i\in 2^{\aleph_0}}$. Consider the mapping $\sigma$ which sends
            $0$ to the clone of projections, $1$ to $\O$, and every
            $a_i$ to $\ppol(\{A_i\})$. Now it is well-known (see \cite{RS84-LocalCompletenessI}) that for any non-empty proper subset $A$ of $X$,
            $\ppol(\{A\})$ is
            covered by $\O$, i.e.\ there exist no local (in fact even no global) clones between $\ppol(\{A\})$ and $\O$. Hence, we have that
            $\sigma(a_i)\vee\sigma(a_j)=\cll{\ppol(A_i)\cup\ppol(A_j)}=\O=\sigma(1)$ for all $i\neq j$. Since clearly $\sigma(a_i)$ contains $\sigma(0)$ for all $i\in 2^{\aleph_0}$, the mapping $\sigma$
            indeed preserves joins.\\
            To construct a meet embedding, fix any distinct $a,b\in X$ and
            define for every non-empty subset $A$ of $X\setminus\{a,b\}$ an
            operation $f_A\in\Oo$ by
            $$
                f_A(x)=\begin{cases}a, &\text{if } x\in A\\b,
                &\text{otherwise.}
                \end{cases}
            $$
            Enumerate the non-empty subsets of $X\setminus\{a,b\}$ by $(B_i:i\in 2^{\aleph_0})$.
            Denote the constant unary operation with value $b$ by $c_b$. Let the embedding $\sigma$ map $0$ to
            $\cll{\{c_b\}}$, for all $i\in 2^{\aleph_0}$ map $a_i$ to $\cll{\{f_{B_i}\}}$, and
            let it map $1$ to $\O$. One readily checks that
            $\sigma(a_i)=\cll{\{f_{B_i}\}}$ contains only
            projections and, up to fictitious variables, the operations $f_{B_i}$ and $c_b$.
            Therefore, for $i\neq j$ we have $\sigma(a_i)\wedge
            \sigma(a_j)=\cll{\{c_b\}}=\sigma(0)$. Since clearly
            $\sigma(a_i)\subseteq \sigma(1)=\O$ for all $i\in
            2^{\aleph_0}$, we conclude that $\sigma$ does indeed
            preserve meets.

        \end{proof}

        Simple as the preceding proposition is, it still shows us as a
        consequence that Theorem \ref{thm:algebraicIntervals} is not
        optimal.

        \begin{cor}
            $\Cll(X)$ is not embeddable into any algebraic lattice
            with countably many compact elements.
        \end{cor}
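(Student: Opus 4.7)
The plan is to derive a contradiction by composing a hypothetical embedding $\Cll(X)\hookrightarrow \L$ (with $\L$ algebraic, $|K(\L)|\leq \aleph_0$) with the meet-preserving embedding $M_{2^{\aleph_0}}\to\Cll(X)$ from the preceding proposition. Since a lattice embedding preserves binary meets, the composition yields a meet-preserving injection $\sigma: M_{2^{\aleph_0}}\to\L$. Every meet-preserving map is automatically order-preserving, so writing $c_0:=\sigma(0)$ and $c_i:=\sigma(a_i)$ for $i\in 2^{\aleph_0}$, we obtain pairwise distinct elements $c_i$ all strictly above $c_0$, with $c_i\wedge c_j=c_0$ whenever $i\neq j$.

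The key observation is that in an algebraic lattice, $c_i>c_0$ forces the existence of a compact element witnessing the strict inequality: indeed, if every compact $k\leq c_i$ were also $\leq c_0$, then $c_i=\bigvee\{k\in K(\L):k\leq c_i\}\leq c_0$, a contradiction. So for each $i\in 2^{\aleph_0}$ we may pick a compact $k_i\leq c_i$ with $k_i\not\leq c_0$. Because $K(\L)$ is countable while there are $2^{\aleph_0}$ indices, the pigeonhole principle supplies a single compact element $k\in K(\L)$ such that $k=k_i$ for uncountably many (indeed $2^{\aleph_0}$ many) values of $i$.

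Pick any two distinct indices $i,j$ from this uncountable set. Then $k\leq c_i$ and $k\leq c_j$, hence $k\leq c_i\wedge c_j=c_0$; this contradicts the defining property $k\not\leq c_0$. Therefore no such embedding $\Cll(X)\hookrightarrow\L$ can exist, which is exactly the statement of the corollary.

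I do not anticipate any real obstacle here. The entire argument rests on two elementary facts that are already in the paper's toolkit: the meet-preserving embedding of $M_{2^{\aleph_0}}$ just constructed, and the standard feature of algebraic lattices that strict inequality is detected by a compact element. The only point to be a bit careful about is noting that a lattice embedding automatically preserves finite meets, so that composing it with a meet-preserving map produces a meet-preserving map, which is all one needs to apply the compactness-plus-pigeonhole argument.
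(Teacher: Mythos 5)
Your proof is correct and follows essentially the same strategy as the paper: reduce to the meet-preserving copy of $M_{2^{\aleph_0}}$ from the preceding proposition and show that no algebraic lattice with countably many compact elements can contain $M_{2^{\aleph_0}}$ as a meet-subsemilattice. The only (cosmetic) difference is in how that last fact is verified --- you argue directly with compact elements and the pigeonhole principle, whereas the paper first represents $\L$ as the subalgebra lattice of an algebra on $\omega$ and counts subsets; your version is, if anything, slightly more self-contained.
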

        \begin{proof}
            It is well-known and easy to check (confer also \cite{CD73}) that any algebraic lattice $\L$ with countably many compact elements can be
            represented as the subalgebra lattice of an algebra over
            the base set
            $\omega$. The meet in the subalgebra lattice $\L$ is just
            the set-theoretical intersection. Now there is certainly
            no uncountable family of subsets of $\omega$ with the
            property that any two distinct members of this family have
            the same intersection $D$; for the union of such a family would have to be uncountable. Consequently, $\L$ cannot
            have $M_{2^{\aleph_0}}$ as a meet-subsemilattice. But $\Cll(X)$ has, hence $\L$ cannot have $\Cll(X)$ as a sublattice.
        \end{proof}

        Observe that this corollary is a strengthening of Corollary
        \ref{cor:localnotinfinite}, since the clone lattice over a
        finite set is an algebraic lattice with countably many
        compact elements.

        We conclude by remarking that the lattice $\Cl(X)$ of all (not
        necessarily local) clones on $X$ is infinitely more
        complicated than $\Cll(X)$: It contains all algebraic
        lattices with at most $2^{\aleph_0}$ compact elements, and
        in particular all lattices of size continuum, as complete
        sublattices \cite{Pin06AlgebraicSublattices}.

\def\ocirc#1{\ifmmode\setbox0=\hbox{$#1$}\dimen0=\ht0 \advance\dimen0
  by1pt\rlap{\hbox to\wd0{\hss\raise\dimen0
  \hbox{\hskip.2em$\scriptscriptstyle\circ$}\hss}}#1\else {\accent"17 #1}\fi}

\end{document}